\theoremstyle{plain}
\newtheorem{theorem}{Theorem}[section]
\newtheorem{lemma}[theorem]{Lemma}
\theoremstyle{definition}
\newcommand{\N}{\operatorname{N}}
\theoremstyle{remark}
\begin{document}

\title[Reducing spheres and weak reducing pairs]{Reducing spheres and weak reducing pairs for Heegaard surfaces in the $3$-sphere}

\author[S. Cho]{Sangbum Cho}
\thanks{The first-named author is supported by the National Research Foundation of Korea(NRF) grant funded by the Korea government(MSIT) (RS-2024-00456645).}
\address{Department of Mathematics Education, Hanyang University, Seoul 04763, Korea}
\email{scho@hanyang.ac.kr}

\author[Y. Koda]{Yuya Koda}
\thanks{The second-named author is supported by JSPS KAKENHI Grant Numbers JP23K20791, JP23H05437 and JP24K06744.}
\address{Department of Mathematics, Hiyoshi Campus, Keio University, Yokohama 223-8521, Japan, and
International Institute for Sustainability with Knotted Chiral Meta Matter (WPI-SKCM$^2$), Hiroshima University, Higashi-Hiroshima 739-8526, Japan}
\email{koda@keio.jp}

\author[J. H. Lee]{Jung Hoon Lee}
\thanks{The third-named author is supported by the National Research Foundation of Korea(NRF) grant funded by the Korea government(MSIT) (RS-2023-00275419).}
\address{Department of Mathematics and Institute of Pure and Applied Mathematics, Jeonbuk National University, Jeonju 54896, Korea}
\email{junghoon@jbnu.ac.kr}



\begin{abstract}
Given a Heegaard surface in the $3$-sphere, we show that any non-separating weak reducing pair for the surface admits a reducing sphere that separates the two disks of the pair if and only if the genus of the surface is at most $3$.
\end{abstract}

\maketitle




\section{Introduction}\label{sec:introduction}

Every closed orientable $3$-manifold can be decomposed into two handlebodies of the same genus $g$ for some $g\geq 0$, which is called a genus-$g$ {\it Heegaard splitting} of the manifold.
The common boundary surface of the two handlebodies is called the {\it Heegaard surface} for the splitting.
In particular, from Waldhausen \cite{Wal68}, it is known that the $3$-sphere $S^3$ admits a unique genus-$g$ Heegaard splitting for each $g\geq 0$ up to isotopy.

Let $\Sigma$ be a genus-$g$ Heegaard surface in $S^3$, and let $V$ and $W$ be the two handlebodies bounded by $\Sigma$.
A pair of essential disks $D \subset V$ and $E \subset W$ is called a {\it weak reducing pair} for $\Sigma$ if $\partial D$ is disjoint from $\partial E$.
When referring to such a pair, we write $D - E$. 
If both $D$ and $E$ are non-separating, we call the pair a non-separating weak reducing pair. 
Of course $\Sigma$ admits a (non-separating) weak reducing pair only when $g \geq 2$, and for each $g \geq 2$ we have infinitely many such pairs.
A sphere $S$ in $S^3$ is called a {\it reducing sphere} for $\Sigma$ if $S \cap \Sigma$ is a single essential circle in $\Sigma$.
The two disks $S \cap V$ and $S \cap W$ are necessarily essential separating disks in $V$ and $W$, respectively.  
Again, $\Sigma$ admits a reducing sphere only when $g \geq 2$, and we have infinitely many such spheres in each case.

The notions of a reducing sphere and a weak reducing pair can be defined for a Heegaard surface $\Sigma$ of an arbitrary $3$-manifold in a similar manner.  
There is a close relationship between reducing spheres and weak reducing pairs.  
A reducing sphere for $\Sigma$ reduces $\Sigma$ to lower genus Heegaard surfaces, and by taking essential disks in the resulting handlebodies, we can easily find weak reducing pairs for $\Sigma$.  
Conversely, suppose that $\Sigma$ admits a weak reducing pair.
Then a maximal compression of $\Sigma$ either yields a reducing sphere for $\Sigma$, or results in an incompressible surface in the $3$-manifold. 
This is a seminal result in Heegaard theory due to Casson and Gordon \cite{C-G}.  
(The disks of the original weak reducing pair may or may not be the members of the collection of disks for the maximal compression.)

We say that a reducing sphere $S$ {\it separates} a weak reducing pair $D - E$ if $S$ is disjoint from $D \cup E$ and if $D$ and $E$ are contained in the different components cut off by $S$.
Given any non-separating weak reducing pair for a genus-$g$ Heegaard surface $\Sigma$ in $S^3$ with $g \geq 2$, it is natural to ask whether there always exists a reducing sphere that separates the pair. 
The answer looks affirmative at a glance, but it is not true in most cases.
The following is our main result.

\begin{theorem}\label{main_theorem}
Given a genus-$g$ Heegaard surface $\Sigma$ in $S^3$ with $g \geq 2$, any non-separating weak reducing pair for $\Sigma$ admits a reducing sphere that separates the pair only when $g$ is $2$ or $3$.
\end{theorem}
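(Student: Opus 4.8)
The plan is to first reformulate the statement purely in terms of curves and disks on $\Sigma$. A reducing sphere $S$ that separates $D - E$ is the same data as an essential separating circle $c \subset \Sigma$ that is disjoint from $\partial D \cup \partial E$, that separates $\partial D$ from $\partial E$, and that bounds essential disks $P \subset V$ and $Q \subset W$ simultaneously (such disks are automatically separating). Thus, for a given non-separating weak reducing pair, I must decide whether such a common-bounding separating curve can be inserted between $\partial D$ and $\partial E$. Throughout I would fix the standard model of the splitting, which is legitimate by Waldhausen's uniqueness \cite{Wal68}: record a standard complete meridian system $D_1,\dots,D_g$ for $V$ and its dual system $E_1,\dots,E_g$ for $W$ with $|\partial D_i \cap \partial E_j| = \delta_{ij}$, together with the $g-1$ standard reducing spheres that exhibit the splitting as a connected sum of $g$ genus-one splittings.

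Before attacking either direction I would dispose of the naive homological obstruction, since understanding why it fails is what pins down the real difficulty. Writing $H_1(\Sigma)$ with symplectic basis $a_i = [\partial D_i]$, $b_i = [\partial E_i]$, one has $[\partial D] \in \langle a_1,\dots,a_g\rangle$ and $[\partial E]\in\langle b_1,\dots,b_g\rangle$, and disjointness forces their algebraic intersection to vanish. A short symplectic-linear-algebra computation then shows that one can always split $H_1(\Sigma)$ as an orthogonal sum of a genus-one symplectic block carrying $[\partial D]$ and a complementary block carrying $[\partial E]$. Hence homology never obstructs a separating reducing sphere, so the genus threshold must come from genuinely topological (isotopy-level) phenomena rather than from algebra. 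This observation shapes both halves of the proof.

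For the forward direction I would treat $g=2$ and $g=3$ by cutting $\Sigma$ along $\partial D \cup \partial E$ and enumerating the homeomorphism types of the resulting pieces. Because both curves are non-separating and $g$ is small, there are only finitely many combinatorial types, and in each I would construct the desired separating curve $c$ explicitly. The mechanism is that a non-separating disk in $V$ always lies in a once-punctured-torus piece $T \subset \Sigma$ whose boundary $\partial T$ is a separating curve cutting off a genus-one handle; the point special to $S^3$, and provable from the standard model, is that such a handle-encircling curve, chosen disjoint from $\partial E$, also bounds a disk in $W$, and so furnishes a reducing sphere. The bound $g\le 3$ is exactly what guarantees that after splitting off a genus-one piece containing $\partial D$ there is enough, but not too much, room on the $E$-side to enclose $\partial E$; the finitely many cases can each be closed by hand.

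The negative direction is the main obstacle. For $g \ge 4$ I would exhibit an explicit non-separating weak reducing pair and prove that no reducing sphere separates it. Since homology is no help, the construction must arrange $\partial D$ and $\partial E$ so that every separating curve disjoint from them which bounds disks on both sides is forced to keep $\partial D$ and $\partial E$ on the same side. I would obtain such a pair by starting from a separated model pair and applying a high power of a handlebody-preserving mapping class whose support has enough genus to be obstructive, which the surplus genus beyond $3$ is precisely what provides, keeping the pair non-separating and weakly reducing while driving its two boundary curves far apart in the curve complex; the absence of a separating reducing sphere then follows from a distance/intersection-number estimate showing that any common-bounding separating curve disjoint from $\partial D\cup\partial E$ cannot lie between them. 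Finally I would propagate the genus-four example to all $g\ge 4$ by boundary-connect-summing with a standard genus-$(g-4)$ piece placed far from $D$ and $E$ and checking that the obstruction persists. Making the distance estimate sharp enough to rule out \emph{all} reducing spheres, not merely the obvious ones, is the step I expect to require the most care.
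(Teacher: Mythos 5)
Your proposal has genuine gaps in both directions, and in each case the missing piece is exactly the 3-dimensional input that the paper supplies. For $g=2,3$: cutting $\Sigma$ along $\partial D \cup \partial E$ and enumerating the homeomorphism types of the complementary pieces cannot work on its own, because the cut-surface type does not record how the curves sit relative to the handlebodies, i.e.\ which curves on $\Sigma$ bound disks in $V$ and in $W$. Waldhausen's theorem gives uniqueness of the splitting, not standardness of an arbitrary weak reducing pair relative to your fixed meridian systems, so the pair cannot be assumed to be in one of finitely many standard positions. Your key assertion --- that a curve cutting off a once-punctured torus containing $\partial D$, chosen disjoint from $\partial E$, automatically bounds a disk in $W$ because we are in $S^3$ --- is false as a general statement (most separating disks in a handlebody of a splitting are not halves of reducing spheres) and is in fact essentially the whole content of the lemma, so the argument is circular. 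The paper's proof instead compresses $\Sigma$ along both $D$ and $E$: in genus $2$ the result is a $2$-sphere carrying the four scars, and a curve separating the $D$-scars from the $E$-scars tubes up to a reducing sphere; in genus $3$ the result is a torus, one side of which is a solid torus, and one needs Gordon's theorem on primitive loops \cite{Go} to produce a further compressing disk $D''$ meeting $\partial E$ in one point, hence a non-separating disk disjoint from $\partial E$ and the scars of $D$, before the sphere argument can be run. No step of this kind appears in your sketch.

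For $g \ge 4$ the gap is more serious: you give no mechanism for the crucial nonexistence claim. If the ``handlebody-preserving mapping class'' you apply is a Goeritz element, it preserves the property of being separated by a reducing sphere, so it produces nothing new; if it preserves only $V$, you must explain why the image pair is still a weak reducing pair and, more importantly, why no reducing sphere separates it. The genus-$g$ splitting of $S^3$ is maximally stabilized and has Hempel distance $0$, with reducing curves in great abundance, so the standard distance and intersection-number machinery you invoke has no purchase here, and you supply no substitute estimate --- you yourself flag this as the step requiring the most care, but it is the entire theorem. The propagation step is also unjustified: stabilizing a genus-$4$ example creates new reducing spheres, and ``placed far from $D$ and $E$'' does not survive isotopy; one must rule out all reducing spheres at every genus. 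The paper avoids both problems with a direct construction for each $g\ge 4$: take Suzuki's Brunnian graph $K_n$ ($n = g-1$) with its tunnel, whose exterior is boundary-irreducible because it is hyperbolic with totally geodesic boundary \cite{Suz84,Ushi99}, together with a nontrivial tunnel-number-one knot, and let $D$, $E$ be cocores of the two tunnel handles. A separating reducing sphere would then meet the genus-$(g-2)$ surface $\Sigma_{D,E}$, which is the boundary of the $K_n$ exterior, in a curve that is shown to be essential (since neither complementary piece with the relevant handles attached is a handlebody, by nontriviality of $K_n$ and of the knot) and to bound disks on both sides, contradicting boundary irreducibility. Some concrete source of boundary irreducibility or an equivalent rigidity statement is what your outline would need and does not have.
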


This work was motivated from the study of the Goeritz groups for $S^3$ by the authors.
For a genus-$g$ Heegaard splitting for $S^3$, the {\it Goeritz group} of the splitting is defined to be the group of isotopy classes of orientation-preserving self-homeomorphisms of $S^3$ that preserve each of the two handlebodies of the splitting setwise.
We denote by $\mathcal{G}_g$ simply the Goeritz group of the genus-$g$ Heegaard splitting of $S^3$.
It is easy to see that $\mathcal{G}_0 = 1$ and $\mathcal{G}_1 = \mathbb{Z} / 2 \mathbb{Z}$.
If $g \geq 2$, the group $\mathcal{G}_g$ is already infinite and much more complicated.
So far, it is known that $\mathcal{G}_2$ and $\mathcal{G}_3$ admit finite generating sets, but when $g \geq 4$, it is still an open question whether $\mathcal{G}_g$ is finitely generated or not.
For a brief history of the Goeritz groups for $S^3$, we refer the reader to the introduction of \cite{CKL24}.

One standard approach to obtain a (finite) generating set of a group is to construct a connected simplicial complex on which the group acts naturally and to study the action in detail.
For the Goeritz group $\mathcal{G}_g$, it was introduced the reducing sphere complex $\mathcal{R}(\Sigma_g)$ for a genus-$g$ Heegaard surface $\Sigma_g$ in $S^3$ for each $g \geq 2$ (see Scharlemann \cite{Sch04} and Zupan \cite{Zupan}).
The vertices of $\mathcal{R}(\Sigma_g)$ are the isotopy classes of reducing spheres for $\Sigma$, and a collection $\{v_0, v_1, \ldots, v_k\}$ of $k+1$ distinct vertices defines a $k$-simplex of $\mathcal{R}(\Sigma_g)$ if there exist representative spheres $S_0, S_1, \ldots, S_k$ of the vertices $v_0, v_1, \ldots, v_k$, respectively, satisfying that $|S_i \cap S_j \cap \Sigma_g| = 0$ if $g \geq 3$ and $|S_i \cap S_j \cap \Sigma_g| = 4$ if $g = 2$, for each $i, j \in \{0, 1, \ldots, k\}$ with $i \neq j$. 

It is shown in \cite{Sch04} and \cite{CKL24} that $\mathcal{R}(\Sigma_2)$ and $\mathcal{R}(\Sigma_3)$ are connected, respectively, which enables us to have finite generating sets for $\mathcal{G}_2$ and $\mathcal{G}_3$.
In the proof of the connectivity of $\mathcal{R}(\Sigma_3)$ in \cite{CKL24}, the property stated in Theorem~\ref{main_theorem} plays a key role. 
In fact, Lemma~4.1 in \cite{CKL24} presents a slightly stronger version of this property.
Furthermore, the connectivity of $\mathcal{R}(\Sigma_2)$ originally established in \cite{Sch04} can also be reproved using the same property.
Unfortunately, this property no longer holds for the Heegaard surface $\Sigma_g$ when $g \geq 4$, as stated in Theorem~\ref{main_theorem}. 
Nevertheless, we still conjecture that $\mathcal{R}(\Sigma_g)$ is connected for each $g \geq 4$.

\section{Proof of the main theorem}\label{sec:proof_of_main_theorem}

We break Theorem~\ref{main_theorem} into the following two lemmas.

\begin{lemma}\label{lem:genus_two_or_three}
Let $\Sigma$ be a Heegaard surface in $S^3$ with genus $2$ or $3$. 
Let $D - E$ be any non-separating weak reducing pair for $\Sigma$.
Then there exists a reducing sphere $P$ that separates the pair.
\end{lemma}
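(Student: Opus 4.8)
The plan is to reformulate the conclusion as the existence of a single curve on $\Sigma$ with good properties, and then to construct that curve by cutting $\Sigma$ along $\partial D \cup \partial E$ and analysing the resulting low-complexity surface. Observe first that a reducing sphere $P$ separating the pair is equivalent to an essential \emph{separating} curve $c \subset \Sigma$ such that: (i) $c$ is disjoint from $\partial D \cup \partial E$; (ii) $c$ separates $\partial D$ from $\partial E$ on $\Sigma$; and (iii) $c$ bounds an essential disk $F_V$ in $V$ and an essential disk $F_W$ in $W$. Indeed, given such a $c$ the sphere $P = F_V \cup F_W$ meets $\Sigma$ in the single essential circle $c$, so it is a reducing sphere, and since $c$ separates $\partial D$ from $\partial E$, the disks $D$ and $E$ lie in the two distinct balls cut off by $P$. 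Thus the whole problem reduces to producing such a curve $c$.

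For the genus-$2$ case I would argue as follows. Since $D$ and $E$ are non-separating, $\partial D$ and $\partial E$ are non-separating curves on $\Sigma$; a short homological argument, together with the fact that a reducing curve must be separating, shows that $\partial D$ and $\partial E$ are neither isotopic nor $\mathbb{Z}/2$-homologous, so $\partial D \cup \partial E$ is non-separating and $\Sigma \setminus (\partial D \cup \partial E)$ is connected. Cutting the genus-$2$ surface along these two curves then yields a four-holed sphere $F$, whose four boundary circles are the two copies of $\partial D$ and the two copies of $\partial E$. In $F$ take the essential separating curve $c$ that separates the two copies of $\partial D$ from the two copies of $\partial E$; back on $\Sigma$ this $c$ is an essential separating curve satisfying (i) and (ii), and it cuts $\Sigma$ into two one-holed tori, one containing $\partial D$ and one containing $\partial E$. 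Compressing the $\partial D$-side one-holed torus along $D$ produces an essential disk $F_V \subset V$ with $\partial F_V = c$, and symmetrically compressing the $\partial E$-side along $E$ produces $F_W \subset W$ with $\partial F_W = c$. This gives (iii), completing the genus-$2$ case.

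For genus $3$ I would run the same scheme but confront a genuinely harder combinatorial situation. Cutting $\Sigma$ along $\partial D \cup \partial E$ no longer gives a planar surface, and the homeomorphism type of the complement varies with the $\mathbb{Z}/2$-homology classes of $\partial D$ and $\partial E$, so I would first enumerate these finitely many configurations. In each I would locate a candidate separating curve $c$ fulfilling (i) and (ii), with the $\partial D$-side a one-holed torus, so that compressing along $D$ again yields $F_V$. The essential new difficulty is property (iii) on the other side: the $\partial E$-side now has positive genus, so compressing along $E$ alone no longer produces a disk, and one must exhibit a further essential disk of $W$ in that region in order to certify that $c$ bounds in $W$. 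I expect this to be the main obstacle, and I expect to resolve it using the fact that the Heegaard splitting of $S^3$ is the standard one of Waldhausen \cite{Wal68}: in the complementary handlebodies the relevant curves are unknotted, which forces the required disk to exist. This is precisely the step that degrades as the genus grows, consistent with the failure of the statement for $g \geq 4$ asserted in Theorem~\ref{main_theorem}; keeping careful track of exactly when the needed disk can be found is what I expect to occupy the bulk of the genus-$3$ argument.
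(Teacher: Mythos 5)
Your genus-$2$ argument is correct, and it is essentially the paper's argument in different clothing: the paper compresses $\Sigma$ along $D$ and $E$ to obtain a $2$-sphere carrying the four scars and chooses a curve $\gamma$ separating the $D$-scars from the $E$-scars; your four-holed sphere obtained by cutting $\Sigma$ along $\partial D \cup \partial E$ is that sphere minus the scars, and your $c$ is that $\gamma$. One small repair: your stated reason that $\partial D$ and $\partial E$ are non-isotopic (``a short homological argument\dots'') should be made precise as follows: if they were isotopic, then $D$ and a push-off of $E$ would form a sphere meeting $\Sigma$ in a single \emph{non-separating} curve, which is impossible in $S^3$ since every embedded sphere there separates. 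With that fixed, the connectedness of the complement, the four-holed sphere, and the tube-compression producing $F_V$ and $F_W$ all go through.

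The genus-$3$ case, however, contains a genuine gap, and you have identified its location yourself: you never produce the disk in $W$ certifying property (iii) on the $\partial E$-side, writing only that you ``expect to resolve it'' using the standardness of the splitting. Enumerating $\mathbb{Z}/2$-homology configurations and invoking ``unknottedness'' is not a mechanism --- producing that disk is precisely the content of the lemma, and nothing in your outline forces its existence. Note also that your order of operations is what makes the step intractable: you fix the candidate curve $c$ first and then try to certify that it bounds in $W$. The paper reverses this. It compresses $\Sigma$ along both $D$ and $E$ to obtain a torus $\Sigma'$, which bounds a solid torus $V'$ on at least one side; since $V'$ is also obtained from the genus-$2$ handlebody $V'' = V$ cut along $D$ by attaching a $2$-handle along $E$, Theorem~1 of \cite{Go} (a primitivity theorem, the real engine of the proof) yields a disk $D' \subset V''$ meeting $\partial E$ in one point, hence a non-separating disk $D'' \subset V''$ disjoint from $\partial E$ and from the scars of $D$. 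Compressing $\Sigma'$ along $D''$ gives a sphere, on which one chooses $\gamma$ separating the $E$-scars from the $D$- and $D''$-scars; the resulting curve then bounds disks on \emph{both} sides automatically, by tubing along $D$ and $D''$ in $V$ and along $E$ in $W$, exactly as in genus $2$. Without an input of comparable strength, your plan stalls at the very step you flagged, so as written the genus-$3$ half of the lemma is unproved.
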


\begin{proof}
First, let $\Sigma$ be a genus-$2$ Heegaard surface, which splits $S^3$ into two genus-$2$ handlebodies $V$ and $W$.
We may assume that $D$ and $E$ are contained in $V$ and $W$, respectively.
Compressing $\Sigma$ along $D$ and $E$, we obtain a $2$-sphere $S$ on which the scars of $D$ and $E$ lie.
A simple closed curve $\gamma$ on $S$ splits $S$ into two disks, say $B_1$ and $B_2$.
We can choose such a curve $\gamma$ so that the two scars of $D$ lie in the interior of $B_1$ and the two scars of $E$ lie in the interior of $B_2$.
Then $\gamma$ bounds two essential separating disks in $V$ and $W$, respectively, that are disjoint from $D$ and $E$.
Then the union of the two separating disks are the desired reducing sphere $P$. 

\smallskip

Next, consider a genus-$3$ Heegaard surface $\Sigma$ in $S^3$, and let $V$ and $W$ be the genus-$3$ handlebodies cut off by $\Sigma$.
Again, we assume that $D$ and $E$ are contained in $V$ and $W$, respectively. 
Compressing $\Sigma$ along $D$ and $E$, we obtain a torus $\Sigma'$ which splits $S^3$ into two manifolds $V'$ and $W'$, where $E$ and $D$ are contained in the interiors of $V'$ and $W'$, respectively. 
Since $\Sigma'$ is a torus in $S^3$, at least one of $V'$ and $W'$ is a solid torus, and so we may assume that $V'$ is a solid torus.

On the other hand, let $V''$ be the genus-$2$ handlebody obtained by cutting $V$ along $D$. 
The manifold obtained by attaching a $2$-handle on $\partial V''$ along $E$ is then identified with the solid torus $V'$.
Thus, by Theorem~1 in \cite{Go}, there exists an essential disk $D'$ in $V''$ such that $\partial D'$ intersects $\partial E$ in a single point.
Consequently, we can find a non-separating essential disk $D''$ in $V''$ disjoint from $\partial E$ (and from $D'$).
Furthermore, we can choose such a disk $D''$ disjoint from the two scars of $D$ in $\partial V''$.
The disk $D''$ turns out to be a meridian disk of the solid torus $V'$ disjoint from all the scars of $D$ and $E$ in $\partial V'$.
Now, compressing the torus $\Sigma' = \partial V'$ along $D''$, we have a $2$-sphere $S$ on which the scars of $D$, $D''$ and $E$ lie.
We can choose a simple closed curve $\gamma$ in $S$ separating the scars of $E$ from the scars of $D$ and $D''$.
As in the case of $g = 2$, we obtain the desired reducing sphere $P$ such that $P \cap \Sigma = \gamma$.
\end{proof}

We simply denote by $\N(X)$ a regular neighborhood of $X$ and $\overline{X}$ the closure of $X$ for a subspace $X$ of a space $Y$. 
The exterior of $X$ is then $\overline{Y - \N(X)}$.
A knot or a connected spatial graph $K$ in $S^3$ is called {\it trivial} if the exterior of $K$ is a handlebody, otherwise $K$ is called {\it non-trivial}.
An arc $\tau$ in $S^3$ is called a {\it tunnel} for $K$ if $\tau$ meets $K$ only in its endpoints and the spatial graph $K \cup \tau$ is trivial.
If a non-trivial $K$ admits a tunnel, then $K$ is called {\it tunnel number one}.

\begin{lemma}\label{lem:genus_four_or_higher}
Let $\Sigma$ be a genus-$g$ Heegaard surface in the $3$-sphere with $g \geq 4$. 
Then there exists a non-separating weak reducing pair $D - E$ that does not admit a reducing sphere $P$ separating the pair.
\end{lemma}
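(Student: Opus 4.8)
The plan is to produce, for each $g\ge 4$, an explicit non-separating weak reducing pair $D-E$ and then to exclude every separating reducing sphere by a single compression argument. First I would record the following reduction. Suppose $P$ is a reducing sphere separating $D-E$. By definition $P\cap(D\cup E)=\emptyset$, so $c:=P\cap\Sigma$ is a single separating curve disjoint from $\partial D\cup\partial E$, and $P$ bounds two balls $B_D\supset D$ and $B_E\supset E$. Since $c$ is essential, the pieces $\Sigma\cap B_D$ and $\Sigma\cap B_E$ have genera $g_1,g_2\ge 1$ with $g_1+g_2=g$. Compressing $\Sigma$ along $D$ and $E$ as in the proof of Lemma~\ref{lem:genus_two_or_three} yields a closed genus-$(g-2)$ surface $\Sigma'$ bounding $V'\supset E$ and $W'\supset D$. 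Because $P$ is disjoint from $\N(D)\cup\N(E)$, it is untouched by the compression: $P\cap\Sigma'=c$ is still one circle, the compression drops the genus of the $B_D$-side to $g_1-1$ and of the $B_E$-side to $g_2-1$, and the two halves of $P$ are properly embedded disks in $V'$ and $W'$.

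This forces one of three alternatives. If $g_1=1$, then $c$ bounds a disk on $\Sigma'$ on the $B_D$-side, so $P$ cuts off a ball meeting $\Sigma'$ in a single disk and containing $D$; equivalently, the arc $a_D\subset W'$ dual to the handle with cocore $D$ is boundary parallel (trivial) in $W'$. If $g_2=1$, symmetrically the arc $a_E\subset V'$ dual to $E$ is trivial in $V'$. Otherwise $g_1,g_2\ge 2$, both sides of $\Sigma'$ have positive genus, $c$ is essential on $\Sigma'$, and $P$ is a reducing sphere for $\Sigma'$ separating $D$ from $E$. Thus it suffices to build $\Sigma'$, the disks $D,E$, and their dual arcs so that $a_D$ is nontrivial in $W'$, $a_E$ is nontrivial in $V'$, and $\Sigma'$ carries no reducing sphere separating $D$ from $E$.

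For the construction I would reverse the compression, using nontrivial tunnel number one knots to inject the knotting in an inseparable way. On a genus-$(g-2)$ surface $\Sigma'$ I attach two tubes whose cocores are $D$ and $E$ and whose cores are the arcs $a_D\subset W'$ and $a_E\subset V'$. The essential point is that, although drilling the nontrivial arc $a_E$ out of the $V'$-side produces a knotted non-handlebody, the complementary handle can be attached along an unknotting tunnel so that $V$ becomes a genuine handlebody, and symmetrically for $W$. Consequently both $V$ and $W$ are handlebodies, so $\Sigma$ is a Heegaard surface, hence the standard genus-$g$ surface by Waldhausen~\cite{Wal68}; and by construction the cocores $D,E$ are non-separating with $\partial D\cap\partial E=\emptyset$, so $D-E$ is a non-separating weak reducing pair. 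To keep the knotting confined and \emph{prime}, I would place both arcs and their knots inside a genus-$2$ portion of $\Sigma'$ and, when $g>4$, add the remaining $g-4$ handles trivially; a reducing sphere cutting off such a trivial handle can never separate $D$ from $E$.

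The main obstacle is verifying that none of the three alternatives can occur. Ruling out the first two reduces to showing that $a_D$ is not boundary parallel in $W'$ and $a_E$ is not boundary parallel in $V'$, which follows from the nontriviality of the underlying knots, since a trivial dual arc would present such a knot as trivial. The genuinely hard step is the third: proving that $\Sigma'$ admits no reducing sphere separating $D$ from $E$. Here one must control all essential spheres and disks meeting the knotted complementary pieces and show that the two knottings cannot be separated by any sphere; I expect to argue this by an innermost-disk analysis of an alleged separating reducing sphere, using incompressibility of the relevant knot exteriors together with Gordon's theorem on $2$-handle additions (Theorem~1 in \cite{Go}), exactly as that theorem is used in Lemma~\ref{lem:genus_two_or_three}. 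This is also where the hypothesis $g\ge 4$ is indispensable: for $g=3$ the surface $\Sigma'$ is a torus, which by Alexander's theorem always bounds a solid torus, so the third alternative is vacuous and this extra structure is precisely what forces a separating reducing sphere to exist (Lemma~\ref{lem:genus_two_or_three}); no such structure survives at genus $4$, which is why the phenomenon first appears there.
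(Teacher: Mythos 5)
Your reduction is essentially the paper's: after isotoping $P$ off the two tubes, compress along $D$ and $E$ and split into the cases where $P\cap\Sigma_{D,E}$ is inessential (bounding a disk over the scars of $D$ alone or of $E$ alone) or essential; this matches the paper's Claim and its final step. But the heart of the lemma is precisely the case you defer --- ``I expect to argue this by an innermost-disk analysis'' --- and that is where your proposal has a genuine gap. The paper never performs any innermost-disk analysis and never uses Gordon's theorem in this lemma. Instead, the construction is chosen so that the essential case dies instantly: one entire side of $\Sigma_{D,E}$ is made \emph{boundary irreducible}. Concretely, the paper takes Suzuki's Brunnian graph $K_n$ with $n=g-1$, whose exterior is hyperbolic with totally geodesic boundary \cite{Ushi99}, and arranges (by re-embedding the complement of two of its edges into a neighborhood of a tunnel number one knot $K^*$, using that the proper subgraph $\alpha_1\cup\alpha_2$ is trivial \cite{Suz84}) that $\Sigma_{D,E}$ bounds a copy of that exterior; then the half of $P$ on that side would be a compressing disk, which cannot exist. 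Crucially, \emph{all} of the extra genus is carried by the graph itself (the handles $h_3,\dots,h_n$ are part of the knotted structure), whereas your plan to confine the knotting to a genus-$2$ portion of $\Sigma'$ and add $g-4$ trivial handles destroys this mechanism: the trivial handles make both sides of $\Sigma'$ compressible and endow $\Sigma'$ with honest reducing spheres, so no boundary-irreducibility shortcut is available and you are forced into exactly the global disk-control argument you have not carried out. Nothing in your sketch indicates how that argument closes, and it is the entire difficulty of the lemma.

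There is also a local error in your construction that propagates into your treatment of the inessential cases. You assert that ``drilling the nontrivial arc $a_E$ out of the $V'$-side produces a knotted non-handlebody, the complementary handle can be attached along an unknotting tunnel so that $V$ becomes a genuine handlebody.'' This is impossible as stated: cutting the handlebody $V$ along the cocore $D$ of the attached handle returns $\overline{V'-\N(a_E)}$, and a handlebody cut along an essential disk is a handlebody, so $\overline{V'-\N(a_E)}$ (and symmetrically $\overline{W'-\N(a_D)}$) is \emph{forced} to be a handlebody; a $1$-handle attached to a non-handlebody never yields a handlebody. The objects that must fail to be handlebodies are $V'$ and $W'$ themselves, which are recovered from the drilled handlebodies by $2$-handle attachment along the tube annuli --- this is exactly the needle the paper threads, with $a_E$ playing the role of the tunnel $\tau^*$ of $K^*$ viewed from the knot exterior side, and $W'$ realized as the exterior of $K_n$. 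Because your premises are inverted here, your verification of the two inessential alternatives (that a boundary-parallel dual arc ``would present such a knot as trivial'') rests on the wrong manifolds; the paper instead rules these out by observing that the relevant genus-$(g-1)$ surface would bound the exterior of $K_n$ (respectively of $K^*$) with one $1$-handle attached, which cannot be a handlebody for the same cut-along-the-cocore reason. In short: your skeleton is right, but the construction as proposed is not known to yield a Heegaard splitting, and the decisive step --- excluding an essential $P\cap\Sigma_{D,E}$ --- is missing the key idea of building boundary irreducibility into one side from the start.
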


\begin{proof}
We start with a well-known spatial graph in $S^3$, called {\it Suzuki's Brunnian graph} of order $n$, for each $n \geq 3$, denoted by $K_n$.
The graph $K_n$ is the union of $n$ simple arcs $\alpha_1, \alpha_2, \ldots, \alpha_n$ meeting only at their common endpoints as illustrated in Figure \ref{fig:graph} (a).
From \cite{Suz84}, $K_n$ is known to be non-trivial although every proper subgraph of $K_n$ is trivial.
Moreover, $K_n$ is tunnel number one, and one can verify easily that the arc $\tau$ in Figure \ref{fig:graph} (a) is a tunnel for $K_n$.
One more important property of $K_n$ is that its exterior admits a hyperbolic structure with totally geodesic boundary, which was shown in \cite{Ushi99}, and consequently, the exterior is boundary irreducible.
That is, there is no compressing disk for the exterior of $K_n$.

First, we choose the arcs $\alpha_1$ and $\alpha_2$, and then both of $V_0 = \N(\alpha_1 \cup \alpha_2)$ and $W_0 = \overline{S^3 - V_0}$ are solid tori.
(In fact, we can choose any other two arcs that do not contain the endpoints of the tunnel $\tau$. If $n = 3$, we can take $\tau$ whose endpoints lie in a single arc by sliding an endpoint of $\tau$.)
A regular neighborhood $\N(K_n)$ of $K_n$ is obtained by attaching $n-2$ $1$-handles $h_3, \ldots, h_n$ on $V_0$, where $h_3, \ldots, h_n$ correspond to the arcs $\alpha_3, \ldots, \alpha_n$, respectively.
Thus $\overline{W_0 - (h_3 \cup \cdots \cup h_n)}$ is the exterior of $K_n$.
Attaching a $1$-handle $h$ on $\N(K_n)$, where $h$ corresponds to the tunnel $\tau$, we have a genus-$n$ handlebody, and the closure of its complement is also a genus-$n$ handlebody, which is $\overline{W_0 - (h_3 \cup \cdots \cup h_n \cup h)}$.

\begin{center}
\labellist
\pinlabel {\large$K_n$} [B] at 90 203
\pinlabel {\small $\alpha_1$} [B] at 33 175
\pinlabel {\small $\alpha_2$} [B] at 61 175
\pinlabel {\small $\alpha_3$} [B] at 89 160
\pinlabel {\small $\alpha_{n-2}$} [B] at 147 160
\pinlabel {\small $\alpha_{n-1}$} [B] at 175 160
\pinlabel {\small $\alpha_n$} [B] at 200 160
\pinlabel {{\color{brown}$\tau$}} [B] at 215 94

\pinlabel {{\color{red}\large$V_0$}} [B] at 297 198
\pinlabel {\large$W_0$} [B] at 265 198
\pinlabel {\small $h_3$} [B] at 362 163
\pinlabel {\small $h_{n-2}$} [B] at 421 163
\pinlabel {\small $h_{n-1}$} [B] at 453 163
\pinlabel {\small $h_n$} [B] at 487 163
\pinlabel {{\color{brown}$h$}} [B] at 500 103

\pinlabel {(a)} [B] at 117 4
\pinlabel {(b)} [B] at 384 4
\endlabellist
\includegraphics[width=16.5cm]{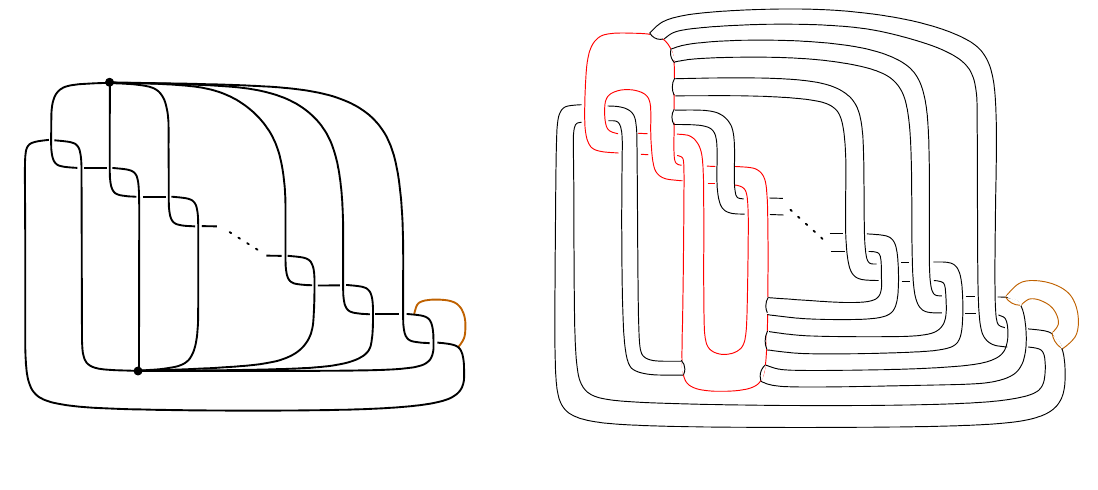}
\captionof{figure}{(a) Suzuki's Brunnian graph $K_n = \alpha_1 \cup \cdots \cup \alpha_n$ with its tunnel $\tau$. (b) The regular neighborhood of $K_n \cup \tau$.}
\label{fig:graph}
\end{center}

Next, we prepare any tunnel number one knot $K^*$ in $S^3$ and its tunnel $\tau^*$.
Then a regular neighborhood of $K^* \cup \tau^*$ can be obtained by attaching a $1$-handle $h^*$ on the solid torus $W^* = \N(K^*)$, where $h^*$ corresponds to the tunnel $\tau^*$.
The solid torus $W^*$ and the $1$-handle $h^*$ attached on $W^*$ in Figure \ref{fig:Heegaard surface} are obtained by a choice of a trefoil knot $K^*$ with its tunnel $\tau^*$.
Note that $W^* \cup h^*$ and $\overline{S^3 - (W^* \cup h^*)}$ are genus-$2$ handlebodies.
Now, by an embedding of $W_0$ into $S^3$, we identify $W_0$ with $W^*$.
For simplicity, we continue to denote the $1$-handles in $W^*$ inherited from $W_0$ by the same symbols $h_3, \ldots, h_n$ and $h$. 
We choose such an embedding so that the feet of $h^*$ is disjoint from those of $h_3, \ldots, h_n$ and $h$.

\begin{center}
\labellist
\pinlabel {\large$W_0$} [B] at 20 25

\pinlabel {\large$W^*$} [B] at 286 15
\pinlabel {{\color{blue}$h^*$}} [B] at 256 138
\pinlabel {{\color{blue}$E$}} [B] at 286 135
\pinlabel {{\color{brown}$h$}} [B] at 433 72
\pinlabel {{\color{brown}$D$}} [B] at 451 81
\endlabellist
\includegraphics[width=16cm]{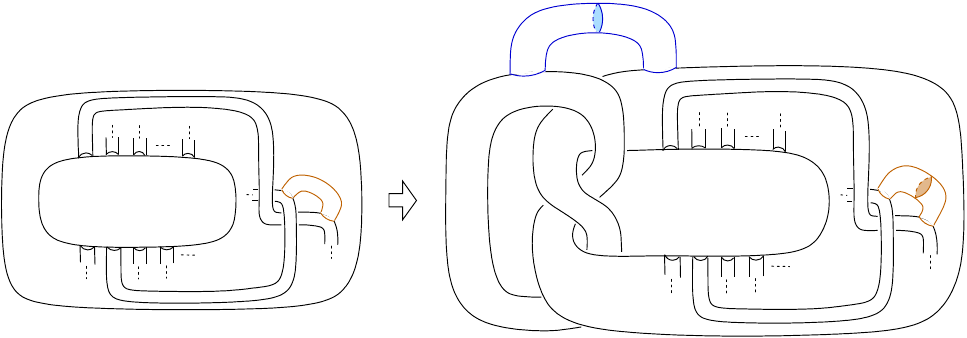}
\captionof{figure}{The non-separating weak reducing pair $D - E$ for the Heegaard surface $\Sigma$.}
\label{fig:Heegaard surface}
\end{center}

Let $V = \overline{S^3 - (W^* \cup h^*)} ~\cup h_3 \cup \cdots \cup h_n \cup h$ and $W = \overline{W^* - (h_3 \cup \cdots \cup h_n \cup h)}~\cup h^*$.
Then $V$ and $W$ are genus-$(n+1)$ handlebodies which form a Heegaard splitting of $S^3$.
Their common boundary surface is a genus-$g$ Heegaard surface in $S^3$, where $g = n+1 \geq 4$, which is our surface $\Sigma$.
We choose co-core disks $D$ and $E$ of the $1$-handles $h$ and $h^*$, respectively. 
They are then essential disjoint non-separating disks in $V$ and $W$, respectively, and so we have a non-separating weak reducing pair $D - E$ for $\Sigma$.

We will show that $D - E$ is the pair we desired.
Suppose, for contradiction, that there exists a reducing sphere $P$ for $\Sigma$ that separates the pair.
We may assume that $P$ is disjoint from the $1$-handles $h$ and $h^*$.
Compressing $\Sigma$ along $D$ and $E$ so that the two scars of $D$ are the feet of $h$ and the two scars of $E$ are the feet of $h^*$, we have a genus-$(g-2)$ surface, which we denote by $\Sigma_{D, E}$.
The separating circle $P \cap \Sigma$ is of course essential in $\Sigma$.
Furthermore, we have the following.

\medskip

\noindent{\it Claim.}
The separating circle $P \cap \Sigma_{D, E}$ is essential in $\Sigma_{D, E}$.

\smallskip

\noindent{\it Proof of Claim.}
Suppose not. 
Then the circle $P \cap \Sigma_{D, E}$ would bound a disk in $\Sigma_{D, E}$, and the disk contains only the two scars of $D$ or only the two scars of $E$ in its interior.
Consider the former case first.
Compressing $\Sigma$ along the separating disk $P \cap W$, we have two Heegaard surfaces whose genera are $1$ and $g-1$, respectively. 
The genus-$(g-1)$ Heegaard surface is isotopic to the boundary of $\overline{W^* - (h_3 \cup \cdots \cup h_n)} \cup h^*$.
We know that $\overline{W^* - (h_3 \cup \cdots \cup h_n)}$ is not a handlebody since it is homeomorphic to the exterior of $K_n$.
Thus $\overline{W^* - (h_3 \cup \cdots \cup h_n)} \cup h^*$ cannot be a handlebody, a contradiction.
For the latter case, compressing $\Sigma$ along the separating disk $P \cap V$, we have a genus-$(g-1)$ Heegaard surface in the same manner, which is isotopic to the boundary of $\overline{S^3 - W^*} \cup h_3 \cup \cdots \cup h_n \cup h$.
But we know that $\overline{S^3 - W^*}$ is not a handlebody since it is the exterior of the non-trivial knot $K^*$.
Thus $\overline{S^3 - W^*} \cup h_3 \cup \cdots \cup h_n \cup h$ cannot be a handlebody, a contradiction again.

\medskip

From the claim, we see that the circle $P \cap \Sigma_{D,E}$ bounds separating essential disks on both sides.  
However, $\Sigma_{D,E}$ is the boundary of $\overline{W^* - (h_3 \cup \cdots \cup h_n)}$, which is homeomorphic to the exterior of $K_n$.  
Since the latter is boundary irreducible, as noted in the first paragraph of the proof, this leads to a contradiction. 
\end{proof}

\bibliographystyle{amsplain}

\end{document}